 \newtheorem{proposition}{Proposition}
\begin{document}
\title{Stationarity of Stochastic Processes In The Fractional Fourier Domains}
\author{ Ahmed El Shafie$^\dagger$, Tamer Khattab$^*$\\
\small \begin{tabular}{c}
$^\dagger$Wireless Intelligent Networks Center (WINC), Nile University, Giza, Egypt. \\
$^*$Electrical Engineering, Qatar University, Doha, Qatar. \\
\end{tabular}
}
\date{}
\maketitle
\begin{abstract}
In this paper, we investigate the stationarity of stochastic processes in the fractional Fourier domains. We study the stationarity of a stochastic process after performing fractional Fourier transform (FRFT), and discrete fractional Fourier transform (DFRT) on both continuous and discrete stochastic processes, respectively. Also we investigate the stationarity of the fractional Fourier series (FRFS) coefficients of a continuous time stochastic process, and the stationarity of the discrete time fractional Fourier transform (DTFRFT) of a discrete time stochastic process. Closed formulas of the input process autocorrelation function and pseudo-autocorrelation function after performing the fractional Fourier transform are derived given that the input is a stationary stochastic process. We derive a formula for the output autocorrelation as a function of the $a^{th}$ power spectral density of the input stochastic process, also we derived a formula for the input fractional power spectral density as a function of the fractional Fourier transform of the output process autocorrelation function. We proved that, the input stochastic process must be zero mean to satisfy a necessary but not a sufficient condition of stationarity in the fractional domains. Closed formulas of the resultant statistics are also shown. It is shown that, in case of real input process, the output process is stationary if and only if the input process is white. On the other hand, if the input process is a complex process, it should be proper white process to obtain a stationary output process.
\end{abstract}
\begin{IEEEkeywords}
Fractional Fourier transform (FRFT), discrete FRFT (DFRFT), additive white Gaussian noise(AWGN), fractional power spectral density
\end{IEEEkeywords}
\section{Introduction}
The fractional Fourier transform (FRFT) has been used in most recent applications and frequently used
as a tool in signal processing and analysis. It has been discussed in many papers and has been proved to be very useful in solving some problems in quantum physics, optics, and signal processing \cite{ma2011novel,lu2011color,yang2008mimo,ozaktas,alma,frft,oz,a,b,c,d}. In \cite{pei2010fractional}, the authors studied the stationary of continuous signals in the fractional domain. The relationship among the
FRFT, the linear canonical transform
(LCT), and the stationary and nonstationary random processes is derived.
Surprisingly, they found many interesting properties. For instance, if we
perform the FRFT for a stationary process, although the result is
no longer stationary, the amplitude of the autocorrelation function
is still independent of time. For the FRFT of a stationary process, the
ambiguity function (AF) is a tilted line and the Wigner distribution
function (WDF) is invariant along a certain direction. In the same paper, they used the notion of the fractional stationary random process and found that a
nonstationary random process can be expressed by a summation
of fractional stationary random processes.

The purpose of this paper is to study stationarity of stochastic processes in the fractional Fourier domains. We study stationarity of stochastic processes after performing the FRFT and DFRFT. We also investigate the stationarity of the fractional Fourier series (FRFS) coefficients of a continuous time stochastic processes and the stationarity of the discrete time fractional Fourier transform (DTFRFT) of a discrete time stochastic process. The case of complex Gaussian processes and circular symmetric Gaussian processes were investigated.

 The effect of performing Fourier transform on stationary proper process was studied \cite{stochastic2}. It has been shown that the discrete Fourier transform (DFT) of the input sequence is proper if and only if the input sequence is proper. The term proper refers to a process $x(t)$ with $\hat{C}(t_1,t_2)=0$, i.e., complex random variables and processes
with a vanishing pseudo-covariance are called proper, where
\begin{equation}\label{12}
    \hat{\mathcal{C}}(t_1,t_2)=E\bigg\{(x(t_1)-E\{x(t_1)\})(x(t_2)-E\{x(t_2)\})\bigg\}
\end{equation}
is the \textbf{relation} function (pseudo) which is necessary for complete description of second-order statistic \cite{stochastic3}.

If the discrete time stochastic process $z[n]$ is a wide sense stationary (WSS) process with power spectral density $Z(\omega)$, then its DFT, $Z(\omega)$, is a nonstationary white noise with autocovariance is given by \cite[p. 519]{papoulis2002probability}: 
 \begin{equation}
 \begin{split}
E\bigg\{Z(u)Z^*(v)\bigg\}&=2 \pi \mathcal{S}_z(u)\delta(u-v), \mbox{$  $ $ $ $ $ $-\pi<u,v<\pi $}\\
 Z(\omega)&=\sum_{n=-\infty}^{\infty}z[n] e^{-i n\omega}\\ z[n]&=\frac{1}{2\pi} \int_{-\infty}^{\infty}Z(\omega)e^{in\omega} d\omega
 \end{split}
\end{equation}
where $\omega=2\pi f$, $f$ is the frequency domain index and $\mathcal{S}_z(u)$ is the power spectrum of $z[n]$.

In \cite{stochastic2}, it has been shown that the properness is preserved under affine transformations and also the complex-multivariate Gaussian density assumes a natural form only for proper random variables. It was proved that circular stationarity of a proper complex time-domain sequence $z[0],z[1],\dots,z[N-1]$ corresponds to uncorrelatedness of the components of its DFT (frequency-domain sequence $Z[0],Z[1],\dots,Z[N-1]$), i.e., the autocorrelation function of the DFT sequence is given by:
\begin{equation}
E\bigg\{Z(k)Z^*(\ell)\bigg\}=\sqrt{N} \mathcal{S}_z(k)\delta(k-\ell)
\end{equation}
 where $k,\ell \in\{0,1,\dots,N-1\}$, $\mathcal{S}_z(k)$ is the DFT of the input sequence autocorrelation function, and $N$ is the input sequence length.

 The main contributions of this paper can be summarized as follows.
 \begin{itemize}
 \item We investigate the stationarity of a stochastic process after performing FRFT, DFRFT, FRFS and DTFRFT.
 \item We study general features of the output signal, i.e., mean, autocorrelation, and pseudo-autocorrelation for the discrete and continuous cases. We provide closed formulas of the output features.
     \item We investigate stationarity of real and complex stochastic processes. In addition, we provide necessary conditions on the input process for stationarity of the output process.
     \item  The relationship among the autocorrelation, pseudo-autocorrelation and the power spectral density of a process is derived.
     \end{itemize}

 This paper is structured as follows. In Section \ref{1}, we study stationarity in continuous FRFT. In Section \ref{2}, we investigate the stationarity of the FRFS coefficients of a continuous time stochastic process and the DTFRFT of a discrete time stochastic process. The stationarity, statistics and probability density function after performing DFRFT is discussed in Section \ref{3}, and finally, we conclude the paper in \ref{secfinal}.
\section{Continuous Fractional Fourier Transform }\label{1}
The continuous FRFT  (Fig. \ref{fig1}) is an integral transformation with $a=\alpha\frac{2}{\pi}$ where $a$ represents the fractional order. The FRFT of a function $x(t)$ is given by:
\begin{equation}\label{ker}
Z_\alpha(u)=(\mathcal{F}_{\alpha,t \rightarrow u}z)(u)=  \int^\infty_{-\infty} z(t)K_\alpha(t,u)dt
\end{equation}
where $\mathcal{F}_{\alpha,t \rightarrow u}$ denotes a transformation from $t$ to $u$ with angle $\alpha$, defined as the rotation angle with respect to $t$ axis, and $K_{\alpha}(t,u)$ is the transformation kernel defined as following:
\begin{equation}
\begin{split}
K_{\alpha} (t,u)= \left\{ \begin{array}{lr}
 \sqrt{\frac{1-i\cot\alpha}{2\pi}} e^{i\frac{u^2}{2}\cot\alpha}\\e^{-i  t u \csc \alpha+i\frac{t^2}{2}\cot\alpha} &\mbox{ if  $\alpha \neq \pi p$$ $ $ $ $ $ $ $ $ $$ $ $ $ $ $ $  $ } \\
  \delta(u-t) &\mbox{if $\alpha$ = $2\pi p$ $ $ $ $ $ $ $ $ $ $ $ $ }\\
    \delta(u+t) &\mbox{if $\alpha+\pi$ =$2\pi p$ $  $ $ $ }
       \end{array} \right.
\end{split}
\end{equation}
where $p=-\infty,\dots,-1,0,1,\dots,\infty$. The fractional Fourier transform with order $a$ is a rotation of the time-frequency plane with respect to time axis by an angle $\alpha=\frac{\pi}{2}a$, the Fourier transform is a rotation by an angle $\alpha=\frac{\pi}{2}$ \cite{alma}. The FRFT kernel can be represented by its eigendecomposition functions \cite{discretefrft}
$$K_{\alpha} (t,u)=\sum_{n=0}^{\infty}e^{-i\alpha n}H_n(t)H_n(u)$$
where  $H_n(.)$ is the normalized Hermite function with unitary variance. The inverse formula of the fractional Fourier transform is obtained through replacing $\alpha$ by $-\alpha$ as follows:
$$z(t)=(\mathcal{F}_{-\alpha,u \rightarrow t}Z_{\alpha})(t)=\int^\infty_{-\infty} Z_\alpha(u)K_{-\alpha}(t,u)du$$
In this paper, we assumed $\alpha$ takes any value between $-\frac{\pi}{2}$ and $\frac{\pi}{2}$, excluding, the special cases, where $\alpha=0,\pm\frac{\pi}{2}$. We assumed that the FRFT is a system with two terminals the input terminal and the output terminal. In Eqn.(\ref{ker}), the input is $z(t)$ and the output is $Z_\alpha(u)$. Assume that $z(t)$ is a stochastic process with autocorrelation function given by \cite{papoulis2002probability,stochastic2,proakis}:
\begin{equation}\label{white}
    \mathcal{R}_z(t_1,t_2)=E\{z(t_1)z^*(t_2)\}
\end{equation}
and a pseudo-autocorrelation function:

 \begin{equation}\label{psed}
   \hat{\mathcal{R}}_z(t_2,t_1)=E\{z(t_1)z(t_2)\}
\end{equation}
where $E\{.\}$ denotes the expectation, and $z(t_1)$, $z(t_2)$ are two samples of the original stochastic process $z(t)$, taken at time instants  $t_1$ and $t_2$, respectively. The expected value of the output process, after performing FRFT, is given by:
$$\mu_\alpha(u)=E\{Z_\alpha(u)\}=\int^\infty_{-\infty} E\{z(t)\}K_\alpha(t,u)dt$$
 \begin{equation}\label{mean1}
   \mu_\alpha(u)=\mathcal{F}_{\alpha,t \rightarrow u}\{\mu(t)\}
 \end{equation}
 where $\mu(t)=E\{z(t)\}$, and, $\mu_\alpha(u)$ is the FRFT of the input process mean $\mu(t)$. If the process is wide sense stationary, $\mu(t)$ is independent of time, $\mu(t)=\mu$, then the expected value of the output process is:
  \begin{equation}\label{mean2}
   \mu_\alpha(u)=\mu \times \mathcal{F}_{\alpha,t \rightarrow u}\{1\}
 \end{equation}

\begin{equation}\label{constant}
   \mathcal{F}_{\alpha,t \rightarrow u}\{1\}=\sqrt{\frac{1+i\tan\alpha}{2\pi}} e^{-i\frac{u^2}{2}\tan\alpha}.
\end{equation}
It can be shown that from (\ref{mean1}),(\ref{mean2}),(\ref{constant}) the expectation of the output of FRFT is non-stationary, i.e., output mean depends on $u$, even if the input process is \textbf{wide sense stationary}. If the input process is zero mean, $\mu=0$, the expected value of the output process is given by:
 \begin{equation}\label{meab}
   \mu_\alpha(u)=0.
 \end{equation}
 \begin{proposition} \label{pro1}
 Given a wide sense stationary process $z(t)$ with mean, $\mu \ne 0$, the output of the fractional Fourier transform is a non-stationary process with mean  $\mu_\alpha(u)$
\begin{equation}\label{constant}
  \mu_\alpha(u)=\mu \sqrt{\frac{1+i\tan\alpha}{2\pi}} e^{-i\frac{u^2}{2}\tan\alpha}.
\end{equation}
 \end{proposition}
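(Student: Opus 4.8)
The plan is to read the result off the mean computation already assembled in the excerpt, with the only genuine work being the evaluation of the fractional Fourier transform of the constant function. First I would start from the definition of the output mean, $\mu_\alpha(u)=E\{Z_\alpha(u)\}$, and push the expectation through the defining integral (\ref{ker}); since the kernel $K_\alpha(t,u)$ is deterministic, linearity of $E\{\cdot\}$ gives $\mu_\alpha(u)=\int_{-\infty}^{\infty}\mu(t)K_\alpha(t,u)\,dt=\mathcal{F}_{\alpha,t\to u}\{\mu(t)\}$, which is exactly (\ref{mean1}). Invoking the hypothesis that $z(t)$ is wide sense stationary, the mean is time independent, $\mu(t)\equiv\mu$, so the constant pulls out of the linear transform and I obtain $\mu_\alpha(u)=\mu\,\mathcal{F}_{\alpha,t\to u}\{1\}$, i.e.\ (\ref{mean2}).

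The substantive step is then to establish (\ref{constant}), the FRFT of the constant $1$. Here I would insert the $\alpha\neq\pi p$ branch of the kernel and reduce the problem to the complex Gaussian (Fresnel) integral $\int_{-\infty}^{\infty}\exp\!\big(i\tfrac{t^2}{2}\cot\alpha-itu\csc\alpha\big)\,dt$. Completing the square in $t$ and applying $\int_{-\infty}^{\infty}e^{-pt^2+qt}\,dt=\sqrt{\pi/p}\,e^{q^2/(4p)}$ (on the branch fixed by the sign convention of the kernel prefactor) produces a $u$-dependent Gaussian phase. The key simplification I expect to carry out is the collapse of the two quadratic-in-$u$ exponents: the kernel prefactor contributes $i\tfrac{u^2}{2}\cot\alpha$ while the completed square contributes a term proportional to $\csc^2\alpha/\cot\alpha$, and the identity $\cot^2\alpha-\csc^2\alpha=-1$ makes their sum collapse to exactly $-i\tfrac{u^2}{2}\tan\alpha$. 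The amplitude simplifies in parallel through $\cot\alpha\,\tan\alpha=1$, reproducing the stated prefactor. This Fresnel evaluation, together with careful bookkeeping of the complex square-root branch, is the main obstacle; everything else is substitution.

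Finally I would combine (\ref{mean2}) with (\ref{constant}) to write $\mu_\alpha(u)=\mu\sqrt{(1+i\tan\alpha)/2\pi}\,e^{-iu^2\tan\alpha/2}$, which is the claimed formula. To conclude non-stationarity I would argue that wide sense stationarity demands a mean that does not vary with the independent variable; but for $\mu\neq0$ and $\alpha$ in the admissible range (so that $\tan\alpha\neq0$), the phase factor $e^{-iu^2\tan\alpha/2}$ genuinely depends on $u$, so $\mu_\alpha(u)$ is non-constant. Hence the output process fails even the first-moment requirement for stationarity, which establishes the proposition.
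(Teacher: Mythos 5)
Your overall route is the same as the paper's: push the expectation through the FRFT integral to obtain $\mu_\alpha(u)=\mathcal{F}_{\alpha,t\rightarrow u}\{\mu(t)\}$, use wide sense stationarity to pull the constant $\mu$ out, evaluate $\mathcal{F}_{\alpha,t\rightarrow u}\{1\}$, and conclude non-stationarity from the $u$-dependence of the result. The only difference is that the paper merely quotes the transform of the constant, while you attempt to derive it by the Fresnel integral.

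That derivation, however, does not reproduce the stated prefactor, and this is a genuine gap in your write-up. Your phase bookkeeping is right: the kernel's $e^{iu^2\cot\alpha/2}$ and the completed-square term combine via $\cot^2\alpha-\csc^2\alpha=-1$ to give exactly $e^{-iu^2\tan\alpha/2}$. But the amplitude does not: the kernel contributes $\sqrt{(1-i\cot\alpha)/(2\pi)}$ and the Gaussian integral $\int_{-\infty}^{\infty}e^{it^2\cot\alpha/2-itu\csc\alpha}\,dt$ contributes $\sqrt{2\pi/(-i\cot\alpha)}$, so the factors of $2\pi$ cancel:
\begin{equation*}
\sqrt{\frac{1-i\cot\alpha}{2\pi}}\cdot\sqrt{\frac{2\pi}{-i\cot\alpha}}=\sqrt{\frac{1-i\cot\alpha}{-i\cot\alpha}}=\sqrt{1+i\tan\alpha}.
\end{equation*}
An honest completion of your own calculation therefore yields $\mathcal{F}_{\alpha,t\rightarrow u}\{1\}=\sqrt{1+i\tan\alpha}\;e^{-iu^2\tan\alpha/2}$, which differs from the proposition's expression by $\sqrt{2\pi}$; your assertion that the amplitude "reproduces the stated prefactor" is where the argument breaks. (Sanity check: as $\alpha\to\pi/2$ the kernel becomes the unitary Fourier kernel $e^{-itu}/\sqrt{2\pi}$, whose transform of $1$ is $\sqrt{2\pi}\,\delta(u)$; the chirp $\sqrt{1+i\tan\alpha}\,e^{-iu^2\tan\alpha/2}$ indeed tends to $\sqrt{2\pi}\,\delta(u)$, whereas the stated expression tends to $\delta(u)$.) In other words, your method, carried out correctly, actually exposes a spurious $1/\sqrt{2\pi}$ in the constant the paper quotes without proof. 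The qualitative conclusion survives either way: whatever the multiplicative constant, the chirp factor $e^{-iu^2\tan\alpha/2}$ makes $\mu_\alpha(u)$ genuinely $u$-dependent whenever $\mu\neq0$ and $\tan\alpha\neq0$, so the first-moment necessary condition for stationarity fails, exactly as the paper argues.
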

 Since a time variant mean is a necessary condition of stationarity, therefore, we obtain the following.
  \begin{proposition}
A zero mean input process is a \textbf{necessary} but not a sufficient condition for the stationarity in the fractional domain.
 \end{proposition}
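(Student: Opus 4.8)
The plan is to split the claim into its two halves, necessity and insufficiency, and to dispatch them by different means: the necessity follows almost immediately from Proposition~\ref{pro1}, while the insufficiency requires a genuine second-order computation together with a counterexample.

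For the necessity direction I would argue by contraposition, invoking the standard fact that a wide sense stationary process must have a mean that is independent of its index variable. Suppose the input mean $\mu$ is nonzero. Then Proposition~\ref{pro1} gives the output mean explicitly as $\mu_\alpha(u)=\mu\sqrt{(1+i\tan\alpha)/(2\pi)}\,e^{-i u^2\tan\alpha/2}$. Since we restrict to $\alpha\in(-\tfrac{\pi}{2},\tfrac{\pi}{2})\setminus\{0\}$, we have $\tan\alpha\neq 0$, so the phase factor $e^{-i u^2\tan\alpha/2}$ genuinely varies with $u$ and $\mu_\alpha(u)$ is not constant in $u$. A $u$-dependent mean immediately violates stationarity, so a nonzero input mean forces a non-stationary output. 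Contraposing, stationarity of the output requires $\mu=0$, which is exactly the assertion that a zero input mean is necessary.

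For the insufficiency direction the idea is to show that the zero-mean condition controls only the first-order statistic and leaves the second-order structure unconstrained, so that stationarity can still fail. I would compute the output autocorrelation of a zero-mean WSS input by writing $E\{Z_\alpha(u_1)Z_\alpha^*(u_2)\}=\int_{-\infty}^{\infty}\int_{-\infty}^{\infty}\mathcal{R}_z(t_1,t_2)\,K_\alpha(t_1,u_1)\,K_\alpha^*(t_2,u_2)\,dt_1\,dt_2$ and substituting the kernel. The product of kernels carries the factor $e^{i(u_1^2-u_2^2)\cot\alpha/2}$, whose exponent $u_1^2-u_2^2=(u_1-u_2)(u_1+u_2)$ depends on the \emph{sum} $u_1+u_2$ and not only on the difference $u_1-u_2$; the residual integral against $\mathcal{R}_z$ likewise retains separate dependence on $u_1$ and $u_2$ for a generic colored autocorrelation. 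A stationary output would require this whole expression to collapse to a function of $u_1-u_2$ alone, which does not occur for an arbitrary zero-mean input.

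To close the insufficiency half I would exhibit a concrete counterexample: take any zero-mean WSS process whose autocorrelation is not a Dirac delta, i.e. a non-white colored process, and verify that the $u_1+u_2$ dependence identified above does not cancel, so the output autocorrelation remains a function of both $u_1$ and $u_2$. This produces a non-stationary output in spite of the zero mean, establishing that the condition is not sufficient. The hard part will be this second half: the necessity argument is a one-line consequence of Proposition~\ref{pro1}, whereas making the second-order calculation precise enough to isolate exactly which term survives, and then selecting a counterexample for which that surviving term provably fails to vanish, demands careful handling of the Gaussian-type kernel integrals rather than a mere appeal to the phase factor.
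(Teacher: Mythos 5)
Your proposal is correct, and it is actually more complete than the paper's own proof of this proposition. The necessity half is essentially identical to the paper's argument: both of you invoke Proposition~\ref{pro1} and observe that for $\mu\neq 0$ and $\alpha\notin\{0,\pm\tfrac{\pi}{2}\}$ the output mean $\mu_\alpha(u)$ genuinely varies with $u$, contradicting a necessary condition for stationarity. Where you diverge is the ``not sufficient'' half: the paper's proof of this proposition silently omits it, deferring the insufficiency to Propositions~\ref{pro3} and~\ref{pro4} and to Appendices~A and~C, where the closed-form output autocorrelation $\mathcal{R}_\alpha(u_1,u_2)=\sec(\alpha)\,\mathcal{R}\bigl(\sec(\alpha)(u_1-u_2)\bigr)e^{i(u_2^2-u_1^2)\tan\alpha}$ is derived and stationarity is shown to hold iff the input is white (or proper white in the complex case). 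Your plan to do the second-order kernel computation and exhibit a zero-mean colored counterexample is exactly the content of that appendix argument, so your route and the paper's global route converge; the benefit of your version is that the proposition becomes self-contained. One technical caution: the factor you isolate, $e^{i(u_1^2-u_2^2)\cot\alpha/2}$, is only the kernel prefactor; the $t$-integration contributes additional $u$-dependent phase, and the net phase in the final formula is $e^{i(u_2^2-u_1^2)\tan\alpha}$, not the prefactor itself. Your conclusion is unaffected --- the surviving phase still depends on $u_1+u_2$, so for any non-white zero-mean input (where $\mathcal{R}(\sec\alpha(u_1-u_2))\neq 0$ for some $u_1\neq u_2$) the output autocorrelation cannot be a function of $u_1-u_2$ alone --- but as you anticipated, the argument must rest on the completed integral rather than on the prefactor.
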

 \begin{proof}
 From Proposition \ref{pro1}, the expected value in the fractional domain $\mu_\alpha(u)$ is always a function of (varying with) the fractional index $u$ except when $\mu=0$, therefore, the output process violates a \textbf{necessary} condition of stationarity which is the independency of the output process on $u$.
 \end{proof}
It should be mentioned that complex random processes with nonzero mean are usually not of interest, since a
\textbf{complex envelope} with nonzero-mean corresponds to a non-stationary bandpass process, i.e., the expected value of the bandpass process of non-zero mean with mean baseband signal is $E\{\Re\{z(t)e^{i2\pi f_ct}\}\}=\Re\{E\{z(t)\} e^{i\pi f_ct}\}$ which is a time dependent process, since $E\{z(t)\}\ne0 $\cite{stochastic2}. Let us use equation (\ref{ker}) to derive the autocorrelation function of the output of the FRFT as a response to an input process $z(t)$ described above (\ref{white}). Assuming $\mathcal{F}_{\alpha,t \rightarrow u}\{z(t)\}=Z_\alpha(u)$, the output autocorrelation function is given by:
\begin{equation}\label{1444}
    \mathcal{R}_\alpha(u_1,u_2)=E\{Z_\alpha(u_1)Z^*_\alpha(u_2)\}
\end{equation}
\begin{eqnarray*}
\mathcal{R}_\alpha(u_1,u_2)&=&\int^\infty_{-\infty} \int^\infty_{-\infty} E\big\{z(t) z^*(s)\big\}\\&&K_\alpha(t,u_1) K^*_\alpha(s,u_2)ds dt
\\&=& \int^\infty_{-\infty} \int^\infty_{-\infty} \mathcal{R}(s,t)K_\alpha(t,u_1)   K^*_\alpha(s,u_2)ds dt
\\&=& \int^\infty_{-\infty} \biggr[\int^\infty_{-\infty} \mathcal{R}(s,t)K_\alpha(t,u_1)dt\biggr]  K^*_\alpha(s,u_2)ds.
\end{eqnarray*}
\begin{equation}
\mathcal{R}_\alpha(u_1,u_2)=\int^\infty_{-\infty}  \beta_\alpha(u_1,s)  K_{-\alpha}(s,u_2)ds.
\end{equation}
where $\beta_\alpha(u_1,s)=\int^\infty_{-\infty} \mathcal{R}(s,t)K_\alpha(t,u_1)dt$ and it represents a rotation of the input autocorrelation function by angle $\alpha$. The equations above can be interpreted as follows. The resultant autocorrelation function is a rotation of the input autocorrelation function with respect to $t$ by angle $\alpha$ given certain $s$, then rotation of the resultant of the first rotation with respect to $s$ by angle $-\alpha$ given a fixed $u_1$. The same argument can be shown for the output pseudo-autocorrelation function. The relationship among the autocorrelation, pseudo-autocorrelation and the power spectral density of a process is derived in Appendix B.

Since the Kernel is unitary, i.e., \\ $\int^\infty_{-\infty}~K_\alpha(s,u_1)~K^*_\alpha(s,u_2)ds~=~\delta(u_2-u_1)$, it can be shown that the output autocorrelation function as a response to an input autocorrelation function $\mathcal{R}(t_2-t_1)=\frac{N_o}{2}\delta(t_2-t_1)$ is given by:
\begin{equation}\label{corre}
\mathcal{R}_\alpha(u_2-u_1)=\frac{N_o}{2}\delta(u_2-u_1).
\end{equation}
Furthermore, the output pseudo-autocorrelation function as a response to a \textbf{proper} and zero mean process is given by:
\begin{equation}\label{psedcorre}
\hat{\mathcal{R}}_\alpha(u_2-u_1)=0.
\end{equation}
Therefore, properness is preserved under the fractional Fourier transform. The proofs of (\ref{corre}) and (\ref{psedcorre}) are existed in Appendix C.
 \begin{proposition}\label{pro3}
 Given a real wide sense stationary process $z(t)$ with mean $\mu$ and autocorrelation $\mathcal{R}(\tau)$ the output process of the FRFT $Z_\alpha(u)$ is a non-stationary process.
 \end{proposition}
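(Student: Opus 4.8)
The plan is to split the argument according to whether the mean $\mu$ is zero, and in each case to exhibit a second-order statistic of $Z_\alpha(u)$ that is not shift invariant. I will use that wide-sense stationarity of a complex-valued process requires three conditions at once: a constant mean, an autocorrelation $\mathcal{R}_\alpha(u_1,u_2)$ depending only on $u_1-u_2$, and a pseudo-autocorrelation $\hat{\mathcal{R}}_\alpha(u_1,u_2)$ depending only on $u_1-u_2$; violating any one of them suffices. The case $\mu\neq 0$ is immediate from Proposition \ref{pro1}: the output mean $\mu_\alpha(u)=\mu\sqrt{(1+i\tan\alpha)/(2\pi)}\,e^{-i(u^2/2)\tan\alpha}$ has a phase $-\tfrac{u^2}{2}\tan\alpha$ that varies with $u$ for every admissible $\alpha\neq 0$, so the mean already fails to be constant and the output is non-stationary.

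The substantive case is $\mu=0$, where the mean is constant and the autocorrelation may even be stationary---for white input, (\ref{corre}) gives $\mathcal{R}_\alpha(u_1,u_2)=\tfrac{N_o}{2}\delta(u_1-u_2)$. Here I would examine the pseudo-autocorrelation instead. Since $z(t)$ is real we have $z^*(s)=z(s)$, hence $E\{z(t)z(s)\}=\mathcal{R}(t-s)$ and
\[
\hat{\mathcal{R}}_\alpha(u_1,u_2)=\int_{-\infty}^{\infty}\!\!\int_{-\infty}^{\infty} \mathcal{R}(t-s)\,K_\alpha(t,u_1)\,K_\alpha(s,u_2)\,dt\,ds.
\]
The crucial point is that both kernels now carry the angle $+\alpha$, rather than the conjugate $K_{-\alpha}=K_\alpha^*$ that entered the autocorrelation; pulling the $u$-dependent quadratic phases out of $K_\alpha(t,u_1)K_\alpha(s,u_2)$ leaves a prefactor $e^{i(u_1^2+u_2^2)\cot\alpha/2}$ in which the two signs agree and therefore cannot cancel as they did in the autocorrelation. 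Writing $u_1^2+u_2^2=\tfrac12[(u_1+u_2)^2+(u_1-u_2)^2]$ exposes an explicit dependence on the sum $u_1+u_2$, which is incompatible with shift invariance.

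To turn this into a proof I would evaluate the double integral in closed form for the white input $\mathcal{R}(\tau)=\tfrac{N_o}{2}\delta(\tau)$, which collapses to the single Fresnel--Gaussian integral $\int e^{i\cot\alpha\,t^2}e^{-it(u_1+u_2)\csc\alpha}\,dt$. Carrying out this integral and recombining all phases produces a pseudo-autocorrelation whose total phase is proportional to $-(u_1+u_2)^2\sin^2\alpha+(u_1-u_2)^2\cos^2\alpha$, so the coefficient of $(u_1+u_2)^2$ equals $-\tfrac14\tan\alpha\neq 0$ for every $\alpha\in(-\tfrac{\pi}{2},\tfrac{\pi}{2})\setminus\{0\}$; hence $\hat{\mathcal{R}}_\alpha$ genuinely depends on $u_1+u_2$ and $Z_\alpha(u)$ is non-stationary. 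The main obstacle is exactly this last step: a soft ``the quadratic phases do not match'' remark is not enough, because one must verify that the integral does not quietly cancel the $u_1+u_2$ dependence carried by the prefactor, which is why the explicit Fresnel evaluation is required. A secondary difficulty is extending the conclusion from the white case to an arbitrary real WSS autocorrelation $\mathcal{R}(\tau)$; here I would argue that the same $+\alpha$ quadratic phase prevents $\hat{\mathcal{R}}_\alpha$ from collapsing to a function of $u_1-u_2$ for any choice of $\mathcal{R}$, since the $u_1+u_2$ dependence lives in the kernel phases and is not introduced by $\mathcal{R}$ itself.
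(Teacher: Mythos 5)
Your proof is correct in substance but takes a genuinely different route from the paper's. The paper proves Proposition \ref{pro3} through the Hermitian autocorrelation alone: Appendix A derives the closed form $\mathcal{R}_\alpha(u_1,u_2)=\sec(\alpha)\,\mathcal{R}\big(\sec(\alpha)(u_1-u_2)\big)e^{i(u_2^2-u_1^2)\tan\alpha}$ in (\ref{444}), whose chirp factor $e^{i(u_2^2-u_1^2)\tan\alpha}=e^{i(u_2-u_1)(u_2+u_1)\tan\alpha}$ depends on $u_1+u_2$ unless $\mathcal{R}$ is a delta, whence the paper concludes ``non-stationary, with stationarity iff white.'' You instead attack the pseudo-autocorrelation, exploiting that for a real input both kernels carry the angle $+\alpha$ so the quadratic phases add rather than cancel. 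Your Fresnel evaluation for white input is correct: the phase $-\tfrac{\tan\alpha}{4}(u_1+u_2)^2+\tfrac{\cot\alpha}{4}(u_1-u_2)^2$ is exactly the phase of $\tfrac{N_o}{2}K_{2\alpha}(u_1,u_2)$, which is what the double integral collapses to by the kernel's symmetry and semigroup property $\int K_\alpha(u_1,s)K_\alpha(s,u_2)\,ds=K_{2\alpha}(u_1,u_2)$. What your route buys is strictly more than the paper's: it shows the white case is \emph{not} an exception---the FRFT of real white noise is improper, with a chirp pseudo-correlation in $u_1+u_2$---so the proposition holds unconditionally as stated. The paper's own proof does not establish this; indeed its ``stationarity iff white'' rider contradicts the unconditional wording of the proposition. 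What the paper's route buys is the explicit output autocorrelation (\ref{444}) and its spectral interpretation, which your argument does not produce.

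Two caveats. First, your conclusion hinges on the convention that wide-sense stationarity of a complex process constrains the pseudo-autocorrelation as well as the autocorrelation; under the weaker Papoulis-style convention (mean and $E\{Z_\alpha(u_1)Z^*_\alpha(u_2)\}$ only), real white input \emph{does} give a stationary output by (\ref{corre}), and the proposition as stated would be false. Your convention is the one the paper implicitly adopts in Proposition \ref{pro4}, where properness is demanded, so the reading is defensible, but you should state it explicitly. Second, your extension to non-white $\mathcal{R}$ is only sketched (``I would argue\dots''). It can be closed cleanly: writing $\mathcal{R}(t-s)=\frac{1}{2\pi}\int S(\nu)e^{i\nu(t-s)}\,d\nu$ and using the FRFT of $e^{i\nu t}$, the pseudo-autocorrelation factors exactly as $e^{-i\frac{u_1^2+u_2^2}{2}\tan\alpha}\,g(u_1-u_2)$ with $g$ a Fourier-type transform of $S(\nu)e^{-i\nu^2\tan\alpha}$; fixing $u_1-u_2$ and varying $u_1+u_2$ shows shift-invariance forces $g\equiv 0$, i.e. $S\equiv 0$, so every nontrivial real WSS input fails. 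Alternatively, you could reserve the pseudo-autocorrelation argument for the white case and dispose of all non-white inputs with the paper's own formula (\ref{444}), since there the chirp multiplies a non-degenerate function of $u_1-u_2$.
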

 \begin{proof}
Since $\mathcal{R}_\alpha(u_1,u_2)$ is a function of domain indexes $u_1$ and $u_2$, therefore, the output process $Z_\alpha(u)$ is a non-stationary process and stationarity can be obtained if and only if (iff) the input process is a \textbf{white process} (See Appendix A and C).
\end{proof}
 \begin{proposition}\label{pro4}
 Given a complex wide sense stationary process $z(t)$ with mean $\mu$, autocorrelation $\mathcal{R}(\tau)$, and pseudo-autocorrelation $\mathcal{\hat{R}}(\tau)$ the output process of the FRFT $Z_\alpha(u)$ is a non-stationary process and stationarity can be obtained iff the input
process is a proper white process.
 \end{proposition}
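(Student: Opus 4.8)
The plan is to test the complex wide-sense stationarity of $Z_\alpha(u)$ against its three defining requirements: a mean independent of $u$, an autocorrelation $\mathcal{R}_\alpha(u_1,u_2)$ that depends only on $u_2-u_1$, and a pseudo-autocorrelation $\hat{\mathcal{R}}_\alpha(u_1,u_2)$ that depends only on $u_2-u_1$; I would then show that all three hold simultaneously exactly when $z(t)$ is a proper white process. The first two requirements are already settled by earlier results. The zero-mean necessary condition established above forces $\mu=0$ for any stationary output, and the argument underlying Proposition \ref{pro3} (Appendices A and C) shows that $\mathcal{R}_\alpha(u_1,u_2)$ is a function of $u_2-u_1$ alone if and only if the input autocorrelation is white, $\mathcal{R}(\tau)=\frac{N_o}{2}\delta(\tau)$, in which case (\ref{corre}) gives $\mathcal{R}_\alpha(u_2-u_1)=\frac{N_o}{2}\delta(u_2-u_1)$. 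The genuinely new content therefore lies entirely in the pseudo-autocorrelation.

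First I would write the output pseudo-autocorrelation as
\begin{equation*}
\hat{\mathcal{R}}_\alpha(u_1,u_2)=\int_{-\infty}^{\infty}\int_{-\infty}^{\infty}\hat{\mathcal{R}}(t,s)\,K_\alpha(t,u_1)\,K_\alpha(s,u_2)\,ds\,dt,
\end{equation*}
and isolate the quadratic phase factors carried by the kernel. The decisive structural observation is that, in contrast to the autocorrelation (\ref{1444}) where the product $K_\alpha(t,u_1)K^*_\alpha(s,u_2)$ contributes the \emph{difference} phase $e^{i(u_1^2-u_2^2)\cot\alpha/2}$, here both kernels are \emph{unconjugated}, so their chirps \emph{add} and produce the common factor $e^{i(u_1^2+u_2^2)\cot\alpha/2}$. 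Because $\cot\alpha\neq 0$ on the admissible range $\alpha\in(-\tfrac{\pi}{2},\tfrac{\pi}{2})\setminus\{0\}$, this factor depends on $u_1^2+u_2^2$, a quantity that cannot be rewritten as a function of $u_2-u_1$ alone.

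Next I would insert the WSS input form $\hat{\mathcal{R}}(t,s)=\hat{\mathcal{R}}(t-s)$, substitute $\tau=t-s$, and evaluate the resulting Gaussian (Fresnel) integral over the sum variable. After completing the square, the integral contributes a further quadratic phase in $(u_1+u_2)$ together with a $\tau$-coupled linear term, and I would track the prefactor $\sqrt{(1-i\cot\alpha)/2\pi}$ through the integration. The outcome factors as $\hat{\mathcal{R}}_\alpha(u_1,u_2)=C\,e^{i\Phi(u_1,u_2)}\,G(u_2-u_1)$, where $G(u_2-u_1)=\int \hat{\mathcal{R}}(\tau)\,e^{i\tau^2\cot\alpha/4}e^{i\tau(u_2-u_1)\csc\alpha/2}\,d\tau$ is a function of the difference alone, while the residual phase $\Phi$ is proportional to $(\cos^2\alpha-\tfrac12)(u_1^2+u_2^2)-u_1u_2$. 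Matching the coefficient of $u_1^2+u_2^2$ to that of $u_1u_2$ shows $\Phi$ reduces to a function of $(u_1-u_2)$ only when $\cos^2\alpha=1$, i.e. $\alpha=0$, which is exactly the degenerate case excluded in the paper. Hence, for every admissible $\alpha$, the residual phase genuinely depends on $u_1^2+u_2^2$, so $\hat{\mathcal{R}}_\alpha$ is a function of $u_2-u_1$ only when $G\equiv 0$; comparing values along a fixed line $u_2-u_1=d$ on which $\Phi$ still varies makes this precise. Since $G\equiv 0$ holds if and only if $\hat{\mathcal{R}}(\tau)\equiv 0$, a stationary output forces the input to be proper, while the converse is immediate from (\ref{psedcorre}). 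Combining the three ingredients yields the claim: $Z_\alpha(u)$ is WSS if and only if $z(t)$ is zero-mean, white, and proper, i.e. a proper white process.

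I expect the main obstacle to be the third step: carrying out the two-dimensional chirp integral cleanly and certifying that the $u_1^2+u_2^2$ dependence truly survives rather than being cancelled by the integral. One must handle the complex square-root prefactor and the conditional convergence of the Fresnel integral carefully, and verify the coefficient-matching condition $\cos^2\alpha=1$ so as to be confident that non-stationarity of the pseudo-autocorrelation persists for all non-degenerate $\alpha$.
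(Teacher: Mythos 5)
Your proof is correct, and its logical skeleton is the same as the paper's: check the three ingredients of complex wide-sense stationarity (constant mean, difference-only autocorrelation, difference-only pseudo-autocorrelation), obtain sufficiency of the proper-white condition from the unitarity computations (\ref{corre}) and (\ref{psedcorre}) proved in Appendix C, and obtain necessity of whiteness from the chirp factor $e^{i(u_2^2-u_1^2)\tan\alpha}$ in the closed form (\ref{444}). Where you genuinely depart from the paper is in establishing necessity of \emph{properness}. The paper's Appendix A derives $\hat{\mathcal{R}}_\alpha(u_1,u_2)$ via operational FRFT properties (shifting, scaling, chirp multiplication), ending in an expression involving $\hat{G}_{2\beta}$ with $\beta=\arctan(c^2\tan\alpha)$ whose joint dependence on $u_1$ and $u_2$ is left to inspection, and the proof of the proposition merely points to that appendix. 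You instead evaluate the unconjugated double chirp integral directly: after substituting $\tau=t-s$ and performing the Fresnel integration over the remaining variable, one gets exactly your factorization $\hat{\mathcal{R}}_\alpha(u_1,u_2)=C\,e^{i\Phi(u_1,u_2)}G(u_2-u_1)$ with $G(d)=\int\hat{\mathcal{R}}(\tau)e^{i\tau^2\cot\alpha/4}e^{i\tau d\csc\alpha/2}\,d\tau$ and $\Phi\propto(\cos^2\alpha-\tfrac{1}{2})(u_1^2+u_2^2)-u_1u_2$ (both check out against the integral), and your coefficient-matching step --- a quadratic form $A(u_1^2+u_2^2)+Bu_1u_2$ depends only on $u_2-u_1$ iff $B=-2A$, which here forces $\cos^2\alpha=1$ --- converts the paper's implicit claim into an actual proof that a stationary output forces $G\equiv 0$, hence $\hat{\mathcal{R}}\equiv 0$ by injectivity of the Fourier transform applied to $\hat{\mathcal{R}}(\tau)e^{i\tau^2\cot\alpha/4}$. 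Your route is more elementary and closes the ``only if'' gap that the paper glosses over; the paper's route yields a general closed form for $\hat{\mathcal{R}}_\alpha$ that is reusable beyond this proposition. One point you should make explicit in a final write-up: the ``iff'' holds only under the convention (used implicitly by the paper, following its reference on proper processes) that wide-sense stationarity of a complex process requires the relation function as well as the autocorrelation to depend only on the lag; under the weaker definition that ignores the relation function, zero mean plus whiteness alone would already suffice.
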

 \begin{proof}
Since $\mathcal{R}_\alpha(u_1,u_2)$ is a function of domain indexes $u_1$ and $u_2$, therefore, the output process $Z_\alpha(u)$ is a non-stationary process and stationarity can be obtained iff the input process is a \textbf{proper white process} (See Appendix A and C).
\end{proof}
\begin{figure}
  \includegraphics[width=1 \columnwidth]{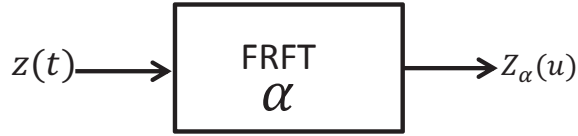}\\
  \caption{FRFT block}\label{fig1}
\end{figure}
\section{Discrete time fractional Fourier Transform and Fractional Fourier Series Expansion }\label{2}
In this section, we study stationarity of the discrete time fractional Fourier transform (DTFRFT) and fractional Fourier series (FRFS). Both DTFRFT and FRFS were studied in \cite{series1}. Aperiodic signal $z(t)$ on a finite interval $t \in [-T/2,T/2]$ can be expanded by its FRFS as following:
\begin{equation}\label{eqn55}
    z(t)=\sum_n C_{\alpha,n}\phi_{\alpha,n}(t)
\end{equation}
where $n=-\infty,\dots,-1,0,1,\dots,\infty$ and $\phi_{\alpha,n}$ is orthonormal basis and is given by:
\begin{equation}\label{eqn55}
   \phi_{\alpha,n}(t)=\frac{K_{-\alpha} (t,nt_o)}{\sqrt{\frac{T\csc \alpha}{2\pi}}}
\end{equation}
where $t_o$ is called the central frequency in the fractional Fourier
domains. However, $t_o$ must equal to $\frac{2\pi \sin \alpha}{T}$, to guarantee that $\phi_{\alpha,n}(t)$ satisfy the orthogonal condition.

The FRFS expansion coefficients are computed by the inner product of the signal and chirp basis signals $\phi_{\alpha,n}(t)$. In \cite{series1}, it was proven that the FRFS coefficients of an aperiodic signal $z(t)$ on a finite interval $t \in [-T/2,T/2]$ can be obtained from the sampled values of FRFT and it is given by:
\begin{equation}\label{eqn5}
    C_{\alpha,n}=\sqrt{\frac{2 \pi \sin \alpha}{T}}Z_\alpha \big(n \frac{2 \pi \sin \alpha}{T} \big)
\end{equation}
where $Z_\alpha(.)$ is the FRFT of $z(t)$, and $T$ is the interval width.

The expected value of $C_{\alpha,n}$ is given by:
\begin{equation}\label{eqn222}
    E\{C_{\alpha,n}\}=\sqrt{\frac{2 \pi \sin \alpha}{T}} E\{ Z_\alpha(n \frac{2 \pi \sin \alpha}{T} )\}.
\end{equation}
Same argument as in Section \ref{2} about the dependency of the expected value of $C_{\alpha,n}$ on the fractional index $n$ can be established here. Therefore the coefficients are not stationary in general, only if the input stochastic process is zero mean, the output mean will be independent of $n$ with zero mean. The autocorrelation function of the FRFS coefficient $ C_{\alpha,n}$ is given by:
\begin{equation}\label{eqn22}
    E\{C_{\alpha,n} C^*_{\alpha,\ell})\}=\frac{2 \pi \sin \alpha}{T} E\{ Z_\alpha(n \frac{2 \pi \sin \alpha}{T} ) Z^*_\alpha(\ell \frac{2 \pi \sin \alpha}{T} )\}
\end{equation}
where $n,\ell=-\infty,\dots,-1,0,1,\dots,\infty$. The autocorrelation function of two coefficients of the FRFS can be stated as:
\begin{equation}\label{eqn33}
    E\{C_{\alpha,n} C^*_{\alpha,\ell})\}=\frac{2 \pi \sin \alpha}{T} \mathcal{R}_\alpha(n \frac{2 \pi \sin \alpha}{T},\ell \frac{2 \pi \sin \alpha}{T} ).
\end{equation}
Similarly, we can show that the pseudo-autocorrelation function is given by:
\begin{equation}\label{eqn333}
    E\{C_{\alpha,n} C_{\alpha,\ell})\}=\frac{2 \pi \sin \alpha}{T} \hat{\mathcal{R}}_\alpha(n \frac{2 \pi \sin \alpha}{T},\ell \frac{2 \pi \sin \alpha}{T} ).
\end{equation}

Sampling a continuous time bandlimited signal $z(t)$ by sampling time $T_s$, we obtain $z[n]$. The DTFRFT $D_\alpha[k]$ of $z[n]$ is related to FRFS coefficients as following \cite{series1}:
\begin{equation}\label{eqn3333}
D_\alpha[k]=C_{\pi/2+\alpha,k}.
\end{equation}
where $k=-\infty,\dots,-1,0,1,\dots,\infty$.
 \begin{proposition}
 Given a finite support wide sense stationary process $z(t)$ with $0\le t\le T$, mean $\mu$, autocorrelation function $\mathcal{R}(\tau)$ and pseudo-autocorrelation function $\mathcal{\hat{R}}(\tau)$, the FRFS and DTFRFT coefficients of $z(t)$ are non-stationary processes.
 \end{proposition}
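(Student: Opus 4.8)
The plan is to reduce the claim to the continuous-domain results of Section \ref{1}, using that the FRFS coefficients are (up to the constant $\sqrt{2\pi\sin\alpha/T}$) samples of $Z_\alpha(u)$ on the grid $u=n\,\tfrac{2\pi\sin\alpha}{T}$. Relations (\ref{eqn222}), (\ref{eqn33}) and (\ref{eqn333}) then transfer the mean, autocorrelation and pseudo-autocorrelation of $Z_\alpha$ directly to $\{C_{\alpha,n}\}$, so it suffices to exhibit one of these statistics depending on the sample index. I would split into the cases $\mu\neq0$ and $\mu=0$.

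For $\mu\neq0$, substituting Proposition \ref{pro1} into (\ref{eqn222}) gives
\begin{equation*}
E\{C_{\alpha,n}\}=\sqrt{\tfrac{2\pi\sin\alpha}{T}}\;\mu\sqrt{\tfrac{1+i\tan\alpha}{2\pi}}\;e^{-\frac{i}{2}\left(n\frac{2\pi\sin\alpha}{T}\right)^{2}\tan\alpha},
\end{equation*}
whose chirp phase depends on $n$; hence the mean is not index-independent and $\{C_{\alpha,n}\}$ is already non-stationary, exactly as in the necessary-condition proposition.

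For $\mu=0$ the mean test is silent and I would argue through the autocorrelation. By (\ref{eqn33}), stationarity of $\{C_{\alpha,n}\}$ would force $\mathcal{R}_\alpha(u_1,u_2)$, restricted to the sampling grid, to depend only on $n-\ell$. By Propositions \ref{pro3} and \ref{pro4}, however, $\mathcal{R}_\alpha(u_1,u_2)$ collapses to a function of $u_2-u_1$ (and $\hat{\mathcal{R}}_\alpha$ to the proper zero form) if and only if the input is white, respectively proper white. The crux of the proof, and the step I expect to be the main obstacle, is therefore to rule out whiteness under the stated hypotheses: a white WSS process has the flat, infinite-bandwidth spectrum associated with $\mathcal{R}(\tau)=\tfrac{N_o}{2}\delta(\tau)$ and is supported on all of $\mathbb{R}$, which is incompatible both with the finite time support $[0,T]$ and, in the sampled DTFRFT case, with the bandlimitation invoked before (\ref{eqn3333}). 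Once whiteness is excluded, $\mathcal{R}_\alpha(u_1,u_2)$ retains genuine joint dependence on $u_1$ and $u_2$; concretely, I expect the quadratic chirp phase inherited from the kernel (of the form $e^{\frac{i}{2}(u_1^{2}-u_2^{2})\cot\alpha}$, cf.\ Appendix B) to survive on the grid as a factor depending on $n^{2}-\ell^{2}$, which cannot be rewritten as a function of $n-\ell$ alone. The same reasoning applied to (\ref{eqn333}) disposes of the pseudo-autocorrelation, so $\{C_{\alpha,n}\}$ is non-stationary.

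Finally, the DTFRFT coefficients require no separate work: by the identity $D_\alpha[k]=C_{\pi/2+\alpha,k}$ of (\ref{eqn3333}), every statement above holds for $\{D_\alpha[k]\}$ after replacing $\alpha$ by $\pi/2+\alpha$ (which still lies outside the degenerate angles for the admissible range of $\alpha$), completing the proof.
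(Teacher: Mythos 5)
Your core reduction is exactly the paper's: relations (\ref{eqn222}), (\ref{eqn33}) and (\ref{eqn333}) transfer the mean and (pseudo-)autocorrelation of $Z_\alpha$ to the coefficients $C_{\alpha,n}$, and the paper's own proof is precisely the one-line invocation of Propositions \ref{pro3} and \ref{pro4} through this sampling relation, together with the identity $D_\alpha[k]=C_{\pi/2+\alpha,k}$ of (\ref{eqn3333}) for the DTFRFT branch. Your $\mu\neq0$ mean computation likewise mirrors the discussion in the paper immediately preceding the proposition. Up to that point you and the paper prove the same thing, with the same caveat: stationarity of the coefficients occurs iff the input is white (real case) or proper white (complex case).

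Where you go beyond the paper --- and where the argument breaks --- is the attempt to close that white-input loophole by claiming whiteness is incompatible with the finite support $[0,T]$. That step fails: a white process on a finite interval is a standard (generalized) object --- it is exactly what one expands in a Karhunen--Lo\`{e}ve series on $[0,T]$, and the autocorrelation $\mathcal{R}(\tau)=\frac{N_o}{2}\delta(\tau)$ is perfectly meaningful for $\tau\in[-T,T]$ --- so finite time support does not exclude a delta autocorrelation. Only the bandlimitation assumption invoked before (\ref{eqn3333}) gives a genuine obstruction, and it covers only the DTFRFT branch, not the FRFS branch. The paper never attempts this exclusion; its proof simply concedes the iff, in effect reading the proposition as ``non-stationary except in the (proper) white case.'' So with the incompatibility claim deleted, your proof establishes exactly what the paper's does; as a proof of the literal unconditional statement, it has a gap. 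A further subtlety, unaddressed by both you and the paper, concerns your ``the chirp phase survives on the grid'' step: by (\ref{444}) the phase factor is $e^{i(u_2^2-u_1^2)\tan\alpha}$ (not the $\cot\alpha$ form you quote), and on the lattice $u=n\,2\pi\sin(\alpha)/T$ it depends on $n^2-\ell^2$; however, this degenerates to a function of $n-\ell$ alone whenever $(2\pi\sin(\alpha)/T)^2\tan\alpha\in\pi\mathbb{Z}$, so for such special pairs $(\alpha,T)$ a zero-mean non-white real input would still yield stationary sampled coefficients. Ruling that degeneracy out is a check your argument (and the paper's) would need to make the grid-restriction step airtight.
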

 \begin{proof}
Since the FRFS and DTFRFT coefficients of $z(t)$ are  functions of $\mathcal{R}_\alpha(u_1,u_2)$ which is a function of domain indexes $u_1$ and $u_2$, therefore, the coefficients are non-stationary processes and stationarity is obtained iff the input process the input process is white in case of real process and proper white process if the input process is complex (See Propositions \ref{pro3} and \ref{pro4}).
\end{proof}
\section{Discrete fractional Fourier Transform}\label{3}
In this section, we are studying the effect of performing the  DFRFT (Fig. \ref{fig2}) on a discrete stochastic vectors. The DFRFT is an affine unitary transformation. It is the eigendecomposition of the ordinary DFT matrix, i.e., $\mathcal{F}^a=Q\Lambda^a Q^H$, where, $Q$ is the eigenvectors matrix of the DFT matrix, and $\Lambda$ is the eigenvalues matrix. Assuming that the sampling process produced $N$ samples, i.e., \textbf{time-bandwidth} product equals $N$.

Define a stochastic process $\{Z_i\}$ where $i=1,2,\dots$. Let us rearrange the process samples as following, $Z=[Z_1,Z_2,\dots,Z_N]^T$, where, $Z$ is $N \times 1$ complex column vector, and $Z_k=X_k+iY_k$ is a stochastic random variable where $X_k$,$Y_k$ $\in \mathbb{R}$, $i=\sqrt{-1}$, with mean $\mu=E\{Z\}=[\mu(1),\mu(2),\dots,\mu(N)]^T$, covariance matrix $C_z=E\{(Z-\mu) (Z-\mu)^H\}$, and pseudo-covariance (relation matrix) $P=E\{(Z-\mu) (Z-\mu)^T\}$, which is necessary for complete description of second-order statistic \cite{stochastic3} \cite{tse} where $T$ and $H$ denote transpose and hermitian (transpose and conjugate), respectively. performing discrete FRFT on process vector $Z$ the output process $Z_a$ is given by:
$$Z_a=\mathcal{F}^aZ$$
 The output $Z_a$ is a random vector with mean $\mu _a$, pseudo-covariance matrix $P_a$ and covariance matrix $C_a$
\begin{equation}\label{meanf}
\mu_a=E[Z_a]=E \{\mathcal{F}^aZ\}=\mathcal{F}^a\mu.
\end{equation}
Equation (\ref{meanf}) can been seen as the discrete FRFT of $\mu$. The discrete FRFT matrix maps $N$ points from time domain to $N$ points in fractional domain, each sample will be mapped with different mean based on its location in time and fractional domain, i.e., the expected value of the $n$th output equals the inner product of the $n$th row of the discrete FRFT matrix and $\mu^H$. The covariance matrix $C_a$ is given by:
\begin{eqnarray*}
    C_a&=&E\bigg\{(Z_a-\mu_a)(Z_a-\mu_a)^H\bigg\}\\
    &=&\mathcal{F}^aE\bigg\{ (Z-\mu) (Z-\mu)^H\bigg\}\mathcal{F}^{-a}
\end{eqnarray*}
Covariance matrix of $Z$ is $C_z$ hence,
\begin{equation}\label{covariancematrixformula}
   C_a=\mathcal{F}^aC_z \mathcal{F}^{-a}.
\end{equation}
Similarly, the output pseudo-covariance matrix will take the form:
\begin{equation}\label{covariancematrixformula}
   P_a=\mathcal{F}^aP [\mathcal{F}^{a}]^T.
\end{equation}
  Since $\mathcal{F}^a$ is affine linear unitary transformation, hence, when the input is $Z \sim\mathcal{CN}(\mu,C_z,P)$, the output of the discrete FRFT is $Z_a\sim \mathcal{CN}(\mu_a,C_a,P_a)$, where $\mathcal{CN}(.)$ denotes complex normal distribution. For  $N$-dimensional complex Gaussian random vector $Z$ has real $Z_\Re$ and imaginary $Z_\Im$ components which form a $2N$-dimensional real Gaussian random vector and $Z$ is proper i.e. its pseudo-covariance matrix vanishes \cite{stochastic2}, the probability density function (pdf) of $Z_a \sim \mathcal{CN}(\mu_a,C_a)$ is given by \cite{stochastic2,Rimold}
\begin{equation}
    f_{Z_a}(Z_a)=\frac{1}{\pi^N det(C_a)}e^{-(Z_a-\mu_a)^H C_a^{-1} (Z_a-\mu_a)}
\end{equation}
 If $Z$ is proper with zero mean, the covariance matrix fully specifies the first- and second-order statistics of a circular symmetric random vector \cite{tse}. If $Z \sim \mathcal{CN}(0,\sigma^2 I_{N\times N})$ which is a circular symmetric Gaussian random variables, the output of the discrete FRFT is $Z_a\sim \mathcal{CN}(0,\sigma^2 I_{N\times N})$.
\begin{figure}
  \includegraphics[width=1 \columnwidth]{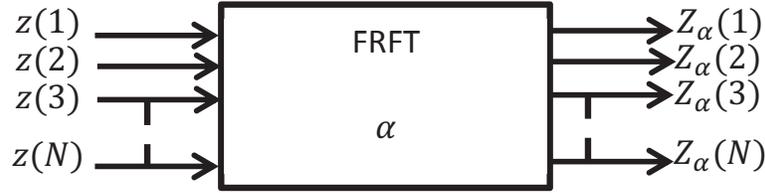}\\
  \caption{DFRFT block}\label{fig2}
\end{figure}
\section{conclusion}\label{secfinal}
In this paper, we have discussed the stationarity of a stochastic process in fractional domains. We have found that the zero mean input process is a necessary but not sufficient condition for stationarity in the fractional domains, and that the fractional domains are non-stationary. It has been shown that, in case of real input process, the output process is stationary if and only if the input process is white. On the other hand, if the input process is a complex process, it should be proper white process to obtain a stationary output process. The statistics of the output stochastic process were proved and discussed. The results are a backbone in systems based on fractional Fourier transform such as (OFDM) systems \cite{yang2008mimo}, sonar, radar systems, moving target detection in airborne SAR \cite{sun2002application}, chirp detection \cite{qin2009detection,jacob2009fractional}, speech processing \cite{ma2011novel}, Optical image encryption \cite{lu2011color}, beamformers \cite{beamformer}, and many other applications in signal processing and optics \cite{ozaktas}.
\section*{Appendix A}
In this Appendix, we derive closed formulas of the output autocorrelation function and pseudo-autocorrelation function corresponding to a stationary input process. Assume that the input process is stationary process with autocorrelation function $\mathcal{R}(s-t)$ where $s,t$ are two instants of the input process $z$, the output autocorrelation function is given by:
\begin{eqnarray*}
\mathcal{R}_\alpha(u_1,u_2)&=&E\{Z_\alpha(u_1)Z^*_\alpha(u_2)\}
\\&=& \int^\infty_{-\infty} \biggr[\int^\infty_{-\infty} \mathcal{R}(t-s)K_\alpha(t,u_1)dt\biggr] \\&&  K^*_\alpha(s,u_2)ds
\\&=& \int^\infty_{-\infty}  S_{z,\alpha}(u_1-\cos(\alpha) s) e^{i u_1  \sin(\alpha) s} \\&&e^{i \sin(\alpha)\cos(\alpha) \frac{s^2}{2}} K^*_\alpha(s,u_2)ds
\end{eqnarray*}
where $S_{z,\alpha}(u_1)=\mathcal{F}_{\alpha,t \rightarrow u_1}\{\mathcal{R}(t)\}$ and it represents the FRFT of the input autocorrelation function with respect to $t$ ($a^{th}$ fractional power spectral density), the output is a transformation from $t$ to $u_1$ with order $a$. Using properties of the fractional Fourier transform \cite{alma}, the autocorrelation as in \cite{pei2010fractional} is given by:
\begin{eqnarray}\label{444}
\mathcal{R}_\alpha(u_1,u_2)=\sec (\alpha)\mathcal{R}\big(\sec (\alpha)(u_1 -u_2)\big) e^{i (u_2^2-u_1^2) \tan({\alpha})}
\end{eqnarray}
where $\alpha \ne \pm \ell \frac{\pi}{2}$, $\ell=1,3,5,\dots, \infty$. Similarly, we can find the output pseudo-autocorrelation function as a response of the stationary input process, according to \cite{stochastic2} the wide sense stationary process has pseudo-autocorrelation function depends on time shift, i.e., independent of time $\hat{\mathcal{R}}(\tau)=\hat{\mathcal{R}}(t-s)$. With some change of variable we get:
\begin{equation}
\begin{split}
\mathcal{\hat{R}}_\alpha(u_1,u_2)&\!=\!\sec(\alpha)\mathcal{F}_{\alpha,y \rightarrow u_2}\biggr\{\!\hat{G}_\alpha\bigg(u_1\!-\!y \cos \alpha\sqrt{\frac{c}{\cot \alpha}} \bigg) e^{i \Gamma_{\alpha}(u_1,u_2)y}\!\biggr\}
\end{split}
\end{equation}
where
\begin{equation}
\begin{split}
c&=\sin \alpha \cos \alpha+\cot \alpha\\& \Gamma_{\alpha}(u_1,u_2)=({u_1 \sin \alpha-u_2 \csc \alpha})\sqrt{\frac{c}{\cot \alpha}}+u_2 \csc \alpha.
\notag
\end{split}
\end{equation}
 performing some properties of the FRFT (i.e., shifting, scaling and exponential multiplication properties), and define $\beta=\arctan(c^2 \tan \alpha)$, we get the following:

\begin{equation}
\begin{split}
\mathcal{\hat{R}}_\alpha(u_1,u_2)&=\sqrt{\frac{1-i \cot\alpha}{c^2-i \cot \alpha}} e^{i \cot(\alpha)\big(1-\frac{\cos^2 \beta}{\cos^2 \alpha}\big)}\\&\hat{G}_{2\beta}\big((u_2-\Gamma^2_{\alpha}(u_1,u_2) \sin \alpha) \frac{\sin \beta}{c \sin \alpha}+u_1 \cos \beta\big)\\&e^{-i u_1 (u_2-\Gamma^2_{\alpha}(u_1,u_2) \sin \alpha) \sin \beta} e^{i\sin \beta \cos \beta \frac{u_1^2}{2}}\\& e^{-i \sin(\alpha) \cos(\alpha) \frac{\Gamma^2_{\alpha}(u_1,u_2)}{2}}\\& e^{i \Gamma_{\alpha}(u_1,u_2)u_2 \cos(\alpha) }
\end{split}
\end{equation}
where $\mathcal{F}_{\alpha,u_1 \rightarrow u_2}\biggr\{\hat{G}_\beta(u_1)\biggr\}=\hat{G}_{2\beta}(u_2)=\mathcal{F}_{2\beta,t \rightarrow u_2}\biggr\{\hat{\mathcal{R}}(t)\biggr\}$.
\section*{Appendix B}
In this Appendix, we prove the relationship between the output process autocorrelation function and the fractional power spectral density. The power spectral density of a stationary process is the Fourier transform of the autocorrelation function, $a^{th}$ fractional power spectral density is the fractional Fourier transform of the autocorrelation function with order $a$. The autocorrelation function of the output of the fractional Fourier transform is given by:
\begin{equation}
\begin{split}
\mathcal{R}_\alpha(u_1,u_2)&\!=\!E\biggr\{Z_\alpha(u_1)Z^*_\alpha(u_2)\biggr\}
\\&\!=\! \int^\infty_{-\infty} \!\int^\infty_{\!-\!\infty} \!\mathcal{R}(t\!-\!s) K_\alpha(t,u_1)  K^*_\alpha(s,u_2)ds dt.
\end{split}
\end{equation}
Let us substitute by $K_\alpha(t,u_1)$ and define $\tau=t-s$ the resultant is:
\begin{eqnarray*}
\mathcal{R}_\alpha(u_1,u_2)&=& \sqrt{\frac{1-i\cot\alpha}{2\pi}} \int^\infty_{-\infty} \int^\infty_{-\infty} \mathcal{R}(\tau)  e^{i\frac{u_1^2}{2}\cot(\alpha)}\\& & e^{-i \csc (\alpha) \tau u_1}e^{-i \csc (\alpha) s u_1}e^{i\frac{s^2}{2}\cot(\alpha)}\\& &e^{i\frac{\tau^2}{2}cot(\alpha)}e^{i\frac{2\tau s}{2}\cot(\alpha)} \\& &  K^*_\alpha(s,u_2)ds dt.
\end{eqnarray*}
\begin{eqnarray*}
\mathcal{R}_\alpha(u_1,u_2)&=&  \int^\infty_{-\infty}\mathcal{F}_{\alpha,\tau \rightarrow u_1}\biggr\{\mathcal{R}(\tau)e^{i\tau s\cot\alpha}\biggr\} e^{-i  s u_1 \csc \alpha}\\& &e^{i\frac{s^2}{2}\cot\alpha}   K^*_\alpha(s,u_2)ds
\end{eqnarray*}
define $S_{z,\alpha}(u_1)$ as the $a^{th}$ fractional power spectral density of the input process $z$, i.e., \\ $S_{z,\alpha}(u_1)~=~\mathcal{F}_{\alpha,\tau \rightarrow u_1}\{\mathcal{R}(\tau)\}$
\begin{equation}
\begin{split}
\mathcal{R}_\alpha(u_1,u_2)\!&=\! \int^\infty_{-\infty}S_{z,\alpha}(u_1\!-\!\cos(\alpha) s) e^{-i \cot^2(\alpha) \sin(\alpha) \cos(\alpha) \frac{s^2}{2}}e^{i u_1 \cot(\alpha) \cos(\alpha) s} \\& \,\,\,\,\,\,\,\,\,\,\,\,\,\,\,\,\,\,\,\,\,\,\,\,\,\,\,\,\,\,\,\,\,\,\,\,\,\,\,\,\,\,\,\,\,\,\,\,\ e^{-i \csc (\alpha) s u_1}e^{i\frac{s^2}{2}\cot(\alpha)}  K^*_\alpha(s,u_2)ds
\end{split}
\end{equation}
\begin{equation}
\begin{split}
\mathcal{R}_\alpha(u_1,u_2)&= \mathcal{F}_{-\alpha,s \rightarrow u_2}\biggr\{S_{z,\alpha}(u_1-\cos(\alpha) s) e^{i\cos(\alpha)\sin(\alpha)\frac{s^2}{2}}e^{-i \sin(\alpha) u_1 s} \biggr\}.
\end{split}
\end{equation}
With the same procedure as in \cite{pei2010fractional}, we can express $\mathcal{R}_\alpha(u_1,u_2)$ as a function of the fractional Fourier transform of the fractional power spectral density, define $\mathcal{F}_{-\alpha,y \rightarrow u_2}\{S_{z,\alpha}(y)\}=\mathcal{R}(u_2)$ it can be shown that:
\begin{equation}
\begin{split}
S_{z,\alpha}(u_1-\cos(\alpha) s)&=\mathcal{F}_{\alpha,u_2  \rightarrow s}\biggr \{\mathcal{R}_\alpha(u_1,u_2)\biggr\}e^{-i\cos(\alpha)\sin(\alpha)\frac{s^2}{2}} e^{i \sin(\alpha) u_1 s}
\end{split}
\end{equation}
by making the change of variable, $\omega=u_1-\cos\alpha s$, the fractional power spectral density can be expressed as a function of the output autocorrelation function as follows:
\begin{equation}
\begin{split}
S_{z,\alpha}(\omega)&\!=\!\mathcal{F}_{\!\alpha,u_2  \!\rightarrow \!s} \biggr\{\mathcal{R}_\alpha(\omega\!+\!\cos(\alpha) s,u_2)\!\biggr\} \notag e^{i\cos(\alpha)\sin(\alpha)\frac{s^2}{2}} e^{i \sin(\alpha) \omega s}.
\end{split}
\end{equation}
where $\mathcal{R}_\alpha(.)$ is given in Appendix A.
\section*{Appendix C}\label{6}
In this Appendix we will prove the autocorrelation function of the output process, when the input is a white stochastic process, i.e., additive white Gaussian noise (AWGN), with autocorrelation function $\mathcal{R}(\tau)=\frac{N_o}{2}\delta(\tau)$, where $\tau=s-t$ is a time shift, $N_o/2$ is the power spectral density of the process:
\begin{eqnarray*}
\mathcal{R}_\alpha(u_2-u_1)&=&E\big\{Z_\alpha(u_1)Z^*_\alpha(u_2)\big\}\\
\\&=& \int^\infty_{-\infty} \int^\infty_{-\infty} E\big\{z(t) z^*(s)\big\}K_\alpha(t,u_1) \\& &  K^*_\alpha(s,u_2)ds dt
\\&=&\frac{N_o}{2} \int^\infty_{-\infty}  K_\alpha(s,u_1)  K^*_\alpha(s,u_2)
\underbrace{\int^\infty_{-\infty}  \delta(s-t) dt}_{= 1   \forall   s\in[-\infty,\infty]} ds
\\&=& \frac{N_o}{2} \int^\infty_{-\infty}  K_\alpha(s,u_1)  K^*_\alpha(s,u_2)ds
\end{eqnarray*}
Kernel is unitary, hence, $$\int^\infty_{-\infty}K_\alpha(s,u_1)K^*_\alpha(s,u_2)ds=\delta(u_2-u_1).$$
Finally, the resultant autocorrelation function
\begin{equation}\label{corree}
\mathcal{R}_\alpha(u_2-u_1)=\frac{N_o}{2}\delta(u_2-u_1).
\end{equation}
This proof can be obtained through substituting in Eqn. (\ref{444}) by the value of the input process autocorrelation function and notifying that $\sec (\alpha) \delta(\sec \alpha(u_1 -u_2))=\delta(u_1 -u_2)$.

Similarly, the output pseudo-autocovariance function as a response to a proper stationary input process is given by:
\begin{eqnarray}
\hat{\mathcal{C}}_\alpha(u_1,u_2)&=& E\bigg\{(Z_{\alpha}(u_1)-\mu_{\alpha})(u_1)(Z_{\alpha}(u_2)-\mu_{\alpha}(u_2))\bigg\}
\notag \\&=& \int^\infty_{-\infty} \int^\infty_{-\infty} E\big\{(z(t)-\mu)( z(s)-\mu)\big\} K_\alpha(t,u_1) K_\alpha(s,u_2)ds dt \notag \\&=&0.
\end{eqnarray}
Hence, the properness is preserved under the fractional Fourier transformation, for the WSS, proper, and zero mean process described in (\ref{corree}), the pseudo-covariance converges to the pseudo-autocorrelation of the process, and, both converges to zero:
 \begin{eqnarray}
\hat{\mathcal{C}}_a(u_2,u_1)=\hat{\mathcal{R}}_\alpha(u_2,u_1)=0.
\end{eqnarray}
\bibliographystyle{IEEEtran}
\bibliography{IEEEabrv,fractional_bib}

\begin{thebibliography}{10}
\providecommand{\url}[1]{#1}
\csname url@samestyle\endcsname
\providecommand{\newblock}{\relax}
\providecommand{\bibinfo}[2]{#2}
\providecommand{\BIBentrySTDinterwordspacing}{\spaceskip=0pt\relax}
\providecommand{\BIBentryALTinterwordstretchfactor}{4}
\providecommand{\BIBentryALTinterwordspacing}{\spaceskip=\fontdimen2\font plus
\BIBentryALTinterwordstretchfactor\fontdimen3\font minus
  \fontdimen4\font\relax}
\providecommand{\BIBforeignlanguage}[2]{{%
\expandafter\ifx\csname l@#1\endcsname\relax
\typeout{** WARNING: IEEEtran.bst: No hyphenation pattern has been}%
\typeout{** loaded for the language `#1'. Using the pattern for}%
\typeout{** the default language instead.}%
\else
\language=\csname l@#1\endcsname
\fi
#2}}
\providecommand{\BIBdecl}{\relax}
\BIBdecl

\bibitem{ma2011novel}
D.~Ma, X.~Xie, and J.~Kuang, ``{A novel} algorithm of seeking {FrFT} order for
  speech processing.''\hskip 1em plus 0.5em minus 0.4em\relax Prague, Czech
  Republic: IEEE International Conference on Acoustics, Speech and Signal
  Processing (ICASSP), May 2011, pp. 3832--3835.

\bibitem{lu2011color}
R.~Tao, J.~Lang, and Y.~Wang, ``Optical image encryption based on the
  multiple-parameter fractional {Fourier} transform,'' \emph{Optics letters},
  vol.~33, no.~6, pp. 581--583, 2008.

\bibitem{yang2008mimo}
Q.~Yang, R.~Tao, Y.~Wang, and E.~Chen, ``{MIMO-OFDM} system based on fractional
  {Fourier} transform and selecting algorithm for optimal order,''
  \emph{Science in China Series F: Information Sciences}, vol.~51, no.~9, pp.
  1360--1371, 2008.

\bibitem{ozaktas}
H.~Ozaktas, M.~Kutay, and Z.~Zalevsky, \emph{The fractional {Fourier} transform
  with applications in optics and signal processing}.\hskip 1em plus 0.5em
  minus 0.4em\relax Wiley New York, 2001.

\bibitem{alma}
L.~Almeida, ``The fractional {Fourier} transform and time-frequency
  representations,'' \emph{IEEE Transactions on Signal Processing}, vol.~42,
  no.~11, pp. 3084--3091, 1994.

\bibitem{frft}
A.~Zayed, ``{A convolution} and product theorem for the fractional {Fourier}
  transform,'' \emph{{IEEE Signal Processing Letters}}, vol.~5, no.~4, pp.
  101--103, 1998.

\bibitem{oz}
H.~Ozaktas, O.~Arikan, M.~Kutay, and G.~Bozdagt, ``Digital computation of the
  fractional {Fourier} transform,'' \emph{IEEE Transactions on Signal
  Processing}, vol.~44, no.~9, pp. 2141--2150, 1996.

\bibitem{a}
T.~Alieva, V.~Lopez, F.~Agullo-Lopez, and L.~Almeida, ``The angular {Fourier}
  transform in optical propagation problems,'' \emph{Journal of Modern Optics},
  vol.~41, pp. 1037--1044, 1994.

\bibitem{b}
D.~Mendlovic and H.~Ozaktas, ``fractional {Fourier} transforms and their
  optical implementation: I,'' \emph{Journal of the Optical Society of America
  A}, vol.~10, no.~9, pp. 1875--1881, 1993.

\bibitem{c}
A.~Lohmann, ``Image rotation, wigner rotation, and the fractional {Fourier}
  transform,'' \emph{Journal of the Optical Society of America A}, vol.~10,
  no.~10, pp. 2181--2186, 1993.

\bibitem{d}
V.~Namias, ``The fractional order {Fourier} and its application to quantum
  mechanics,'' \emph{Journal of the Institute of Mathematics}, vol.~25, no.~10,
  pp. 241--265, 1980.

\bibitem{pei2010fractional}
S.~Pei and J.~Ding, ``Fractional fourier transform, wigner distribution, and
  filter design for stationary and nonstationary random processes,'' \emph{IEEE
  Transactions on Signal Processing}, vol.~58, no.~8, pp. 4079--4092, 2010.

\bibitem{stochastic2}
F.~Neeser and J.~Massey, ``Proper complex random processes with applications to
  information theory,'' \emph{IEEE Transactions on Information Theory},
  vol.~39, no.~4, pp. 1293--1302, July 1993.

\bibitem{stochastic3}
B.~Picinbono, ``Second-order complex random vectors and normal distributions,''
  \emph{IEEE Transactions on Signal Processing}, vol.~44, no.~10, pp.
  2637--2640, October 1996.

\bibitem{papoulis2002probability}
A.~Papoulis and S.~Pillai, \emph{Probability, random variables and stochastic
  processes}, 4th~ed.\hskip 1em plus 0.5em minus 0.4em\relax McGraw Hill Higher
  Education, 2002.

\bibitem{discretefrft}
S.~Pei, M.~Yeh, and C.~Tseng, ``Discrete fractional {Fourier} transform based
  on orthogonal projections,'' \emph{IEEE Transactions on Signal Processing},
  vol.~47, no.~5, pp. 1335--1348, May 1999.

\bibitem{proakis}
J.~Proakis, \emph{Digital communications}.\hskip 1em plus 0.5em minus
  0.4em\relax McGraw-hill, 1987, vol. 1221.

\bibitem{series1}
M.-H.~Y. Soo-Chang~Pei and T.-L. Luo, ``Fractional {Fourier} series expansion
  for finite signals and dual extension to discrete-time fractional {Fourier}
  transform,'' \emph{IEEE Transactions on Signal Processing}, vol.~47, no.~10,
  pp. 2883--2888, October 1999.

\bibitem{tse}
D.~Tse and P.~Viswanath, \emph{Fundamentals of wireless communication}.\hskip
  1em plus 0.5em minus 0.4em\relax Cambridge University Press, 2005.

\bibitem{Rimold}
B.~Rimoldi, \emph{Course Notes: Principles Of Digital Communications}.

\bibitem{sun2002application}
H.~Sun, G.~Liu, H.~Gu, and W.~Su, ``Application of the fractional {Fourier}
  transform to moving target detection in airborne sar,'' \emph{IEEE
  Transactions on Aerospace and Electronic Systems}, vol.~38, no.~4, pp.
  1416--1424, October 2002.

\bibitem{qin2009detection}
Y.~Qin, L.~Wenyao, Z.~Shouli, and H.~Hairong, ``Detection of chirp signal by
  combination of kurtosis detection and filtering in fractional fourier
  domain,'' in \emph{2nd International Congress on Image and Signal Processing
  CISP}, October 2009, pp. 1--6.

\bibitem{jacob2009fractional}
R.~Jacob, T.~Thomas, and A.~Unnikrishnan, ``Fractional {Fourier} transform
  based chirp detector versus some conventional detectors,'' in
  \emph{International Symposium on Ocean Electronics (SYMPOL)}.\hskip 1em plus
  0.5em minus 0.4em\relax IEEE, November 2009, pp. 56--65.

\bibitem{beamformer}
I.~S.~A. Yetik and A.~Nehorai, ``{Beamforming} using the fractional {Fourier}
  transform,'' \emph{IEEE Transactions on Signal Processing}, vol.~51, no.~6,
  pp. 1663--1668, June 2003.

\end{thebibliography}
\end{document}